\documentclass[11pt]{amsart}

\usepackage[
  textwidth=5.4in,
  textheight=8.2in,
  centering
]{geometry}

\usepackage{amsmath,amssymb,amsfonts}
\usepackage{amscd}
\usepackage{hyperref}

\newtheorem{theorem}{Theorem}
\newtheorem{lemma}{Lemma}
\newtheorem{remark}{Remark}
\newtheorem{corollary}{Corollary}
\newtheorem{definition}{Definition}

\newtheorem{question}{Question}

\DeclareMathOperator{\ind}{ind}

\title{On a Weak Form of the Topological Hedetniemi Conjecture}

\author{Hamid Reza Daneshpajouh}
\address{School of Mathematical Sciences, University of Nottingham Ningbo China, 199 Taikang East Road, Ningbo, 315100, China}
\email{Hamid-Reza.Daneshpajouh@nottingham.edu.cn}

\begin{document}

\maketitle

\begin{abstract}
The topological Hedetniemi conjecture asserts that the topological index of the product of two \(\mathbb{Z}/2\)-complexes is equal to the minimum of their respective indices.  Bui and Daneshpajouh studied a natural generalization of this conjecture to \(G\)-spaces and proved that this generalized conjecture fails whenever the group \(G\) is neither cyclic of prime-power order nor generalized quaternion. More precisely, for any such group \(G\), they constructed finite free \(G\)-simplicial complexes, each having topological index one, whose product has topological index zero.

The purpose of this note is to demonstrate that this discrepancy can be made arbitrarily large. Indeed, for every integer \(n \geq 1\), we show that there exist two finite free \(G\)-simplicial complexes, each of topological index $n$, whose product has topological index zero, provided that \(G\) is neither cyclic of prime-power order nor generalized quaternion. In particular, the corresponding weak form of the generalized topological Hedetniemi conjecture fails in the strongest possible sense for those groups.
\end{abstract}


\section{Introduction}

For finite graphs \(A\) and \(B\), their categorical product satisfies \(\chi(A\times B)\leq \min\{\chi(A),\chi(B)\}\), since each coordinate projection is a graph homomorphism. Hedetniemi's conjecture asserted that equality always holds. In terms of the Poljak--R\"{o}dl function 
\[f(n)= \min\{\chi(A\times B):\chi(A)=\chi(B)=n\},\] 
the conjecture is equivalent to the assertion that \(f(n)=n\) for every positive integer \(n\). El-Zahar and Sauer~\cite{el1985chromatic} proved Hedetniemi's conjecture for \(n\leq 4\). After remaining open for more than five decades, the conjecture was disproved by Shitov~\cite{shitov2019}, who showed that \(f(n)<n\) for all sufficiently large \(n\). Subsequent work~\cite{he2021hedetniemi, zhu2021relatively} pushed this strict inequality to smaller values of \(n\), and ultimately Tardif~\cite{tardif2023chromatic} showed the existence of a counterexample for the smallest possible such value, namely \(n=5\). A weaker asymptotic version, known as the \emph{weak Hedetniemi conjecture}, asserted that \(f(n)\to\infty\) as \(n\to\infty\); see~\cite{poljak1981arc}. This conjecture was recently disproved by Raanes~\cite{raanes2026counterexamples} by establishing the uniform bound \(f(n)\leq 4\) for every \(n\). Thus, both the original conjecture and its weak asymptotic variant are now known to be false.

Nevertheless, several intriguing problems inspired by Hedetniemi’s conjecture remain open. One of the most prominent is its topological analogue, known as the \emph{topological Hedetniemi conjecture}. Shortly before the original conjecture was disproved, Wrochna~\cite{wrochna2019} and Matsushita~\cite{matsushita2017z2} independently showed that Hedetniemi's conjecture would imply
\[
\operatorname{ind}_{\mathbb{Z}/2}(X \times Y)
=
\min\bigl\{
    \operatorname{ind}_{\mathbb{Z}/2}(X),
    \operatorname{ind}_{\mathbb{Z}/2}(Y)
\bigr\}
\]
for all finite free $\mathbb{Z}/2$-simplicial complexes $X$ and $Y$, where the product is equipped with the diagonal action. Wrochna~\cite{wrochna2019} conjectured that this equality holds. In~\cite{daneshpajouh2023hedetniemi}, an analogous statement was established for the homological index, a weaker invariant than the topological index. More recently, the natural extension of the topological Hedetniemi conjecture to finite free $G$-simplicial complexes was studied in~\cite{bui2024topological}. It was shown that this extension fails whenever $G$ is not a \emph{nice group}, where a nice group is either a cyclic group of prime-power order or a generalized quaternion group. More precisely, it was shown in~\cite[Theorem 1]{bui2024topological} that for every finite group $G$ outside these two classes, there exist finite free $G$-simplicial complexes $X$ and $Y$ such that
\[
\operatorname{ind}_G(X)
=
\operatorname{ind}_G(Y)
=
1,
\qquad\text{but}\qquad
\operatorname{ind}_G(X \times Y)
=
0.
\]
Here, for a free $G$-space $X$, the topological index is defined by
\[
\operatorname{ind}_G(X)
=
\min\bigl\{
    k \geq 0 :
    \text{there exists a $G$-equivariant map }
    X \longrightarrow E_kG
\bigr\},
\]
where $E_kG=G^{*(k+1)}$ is the $(k+1)$-fold topological join of $G$, equipped with the diagonal $G$-action. In particular, $E_k \mathbb{Z}/2$ is equivariantly homeomorphic to the antipodal $k$-sphere $S^k$.

Motivated by the Poljak--R\"{o}dl function, we introduce a numerical invariant that measures the behavior of the topological index under products.

\begin{definition}
\label{def:TG}
Let \(G\) be a nontrivial finite group. For \(n\geq 0\), define the \emph{topological Poljak--R\"{o}dl function} of \(G\) by
\[
T_G(n)=
\min\left\{
\operatorname{ind}_G(X\times Y):
\begin{array}{l}
X\text{ and }Y\text{ are finite free \(G\)-simplicial complexes},\\
\operatorname{ind}_G(X)=\operatorname{ind}_G(Y)=n
\end{array}
\right\},
\]
where \(X\times Y\) is equipped with the diagonal \(G\)-action.
\end{definition}

Since the coordinate projections \(X\times Y\to X\) and \(X\times Y\to Y\) are \(G\)-equivariant, monotonicity of the topological index gives \(T_G(n)\leq n\). In this terminology, the generalized topological Hedetniemi conjecture for \(G\) is precisely the assertion that \(T_G(n)=n\) for every \(n\geq 0\); see~\cite[Remark 2]{bui2024topological}. Moreover, the results of~\cite{bui2024topological} determine the first nontrivial value:
\[
T_G(1)=
\begin{cases}
1, & \text{if \(G\) is nice},\\
0, & \text{if \(G\) is not nice}.
\end{cases}
\]

The purpose of this note is to determine \(T_G(n)\) in every positive degree for groups that are not nice. Our main result shows that the failure detected in degree \(1\) persists in all higher degrees.

\begin{theorem}
\label{thm:main}
If $G$ is not a nice group, that is, if \(G\) is neither
cyclic of prime-power order nor generalized quaternion, then
\(
T_G(n)=0
\)
for every \(n\geq 1\).
\end{theorem}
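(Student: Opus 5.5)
The plan is to reduce everything to a statement about stabilizers of connected components, and then to take induced spaces built from joins of a subgroup.

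\emph{Reduction.} A finite free $G$-complex $Z$ has $\operatorname{ind}_G(Z)=0$ exactly when there is a $G$-map $Z\to G$. Let $H,K\le G$ be nontrivial subgroups with
\[
H\cap gKg^{-1}=1 \quad\text{for all } g\in G.
\]
Put $X=G\times_H E_nH$ and $Y=G\times_K E_nK$. These are finite free $G$-simplicial complexes. As $G$-sets, $G/H\times G/K\cong\bigsqcup_{HgK}G/(H\cap gKg^{-1})=\bigsqcup G$, so
\[
X\times Y\cong G\times W
\]
for a suitable space $W$, with $G$ acting on the first factor. Concretely, $W$ is a disjoint union of pieces of the form $E_nH\times E_nK$. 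Projection to the first factor then gives a $G$-map $X\times Y\to G$, so $\operatorname{ind}_G(X\times Y)=0$. I will check this decomposition carefully at the level of simplicial complexes, using the standard isomorphism $(G\times_H A)\times(G\times_K B)\cong\bigsqcup_{HgK} G\times_{H\cap gKg^{-1}}(A\times gB)$.

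\emph{Index of the factors.} The inclusion $E_nH\subseteq E_nG$ is $H$-equivariant, so it induces a $G$-map $G\times_H E_nH\to E_nG$, and hence $\operatorname{ind}_G(X)\le n$. Conversely, $E_nH$ embeds $H$-equivariantly in $X$ as the fibre over the coset $eH$. Restricting any $G$-map $X\to E_kG$ gives an $H$-map $E_nH\to E_kG$. Since $E_kG$ is a free, $(k-1)$-connected $H$-complex of dimension $k$, it admits an $H$-map to $E_kH$, so we obtain an $H$-map $E_nH\to E_kH$. Dold's theorem for the nontrivial group $H$ (equivalently $\operatorname{ind}_H(E_nH)=n$) then forces $k\ge n$. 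Hence $\operatorname{ind}_G(X)=n$, and likewise $\operatorname{ind}_G(Y)=n$. Combined with the reduction, this gives $T_G(n)=0$.

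\emph{Group theory, the main point to check.} It remains to find such $H,K$ whenever $G$ is not nice.
\begin{itemize}
\item If $|G|$ has two distinct prime divisors $p$ and $q$, take $H$ of order $p$ and $K$ of order $q$. Their conjugates have coprime orders, so every intersection is trivial.
\item Otherwise $G$ is a $p$-group. Since $G$ is neither cyclic nor generalized quaternion, the classical characterization says it has more than one subgroup of order $p$. Take $H\le Z(G)$ of order $p$, and let $K\ne H$ be another subgroup of order $p$. Because $H$ is normal, no conjugate of $K$ equals $H$. Two distinct subgroups of prime order meet trivially, so $H\cap gKg^{-1}=1$ for all $g$.
\end{itemize}
I expect the only delicate step to be the lower bound on $\operatorname{ind}_G(X)$, namely correctly invoking the generalized Borsuk--Ulam (Dold) theorem for $H$. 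This is standard, so I expect the argument to go through. It also presumably mirrors the degree-one construction of~\cite{bui2024topological}, with $E_1$ replaced by $E_n$.
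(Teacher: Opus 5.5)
Your proposal is correct and follows the paper's proof essentially step by step. It uses the same induced complexes $G\times_H E_nH$ and $G\times_K E_nK$, with prime-order $H,K$ chosen exactly as in Lemma~\ref{lem:subgroups}, the same restriction-plus-$\ind_H(E_nH)=n$ argument for the lower bound, and a $G$-map to $G$ coming from the freeness of $G/H\times G/K$ (which your double-coset decomposition simply restates); the only cosmetic difference is that you get $\ind_G(X)\le n$ directly from the $H$-equivariant inclusion $E_nH\subseteq E_nG$, where the paper instead invokes the equivariant extension property.
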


An immediate consequence of Theorem~\ref{thm:main} is that the equivariant
index can behave strikingly nonadditively under joins. More precisely, for
every such group \(G\), the gap between the standard upper bound for the
index of a join and its actual index can be made arbitrarily large.

Recall that the join of two topological spaces \(X\) and \(Y\) is the
quotient
\[
X*Y=(X\times Y\times[0,1])/{\sim},
\]
where
\[
(x,y_1,0)\sim(x,y_2,0)
\qquad\text{and}\qquad
(x_1,y,1)\sim(x_2,y,1).
\]
We denote the equivalence class of \((x,y,t)\) by
\(
(1-t)x\oplus ty.
\)
If \(X\) and \(Y\) are \(G\)-spaces, then \(X*Y\) carries the diagonal
\(G\)-action
\[
g\cdot\bigl((1-t)x\oplus ty\bigr)
=
(1-t)(gx)\oplus t(gy).
\]

The canonical inclusions of \(X\) and \(Y\) into \(X*Y\) give the lower
bound
\[
\max\{\ind_G X,\ind_G Y\}\leq \ind_G(X*Y).
\]
On the other hand, if \(X\to E_nG\) and \(Y\to E_mG\) are \(G\)-maps,
then their join induces a \(G\)-map
\[
X*Y\longrightarrow E_nG*E_mG\cong E_{n+m+1}G.
\]
Consequently,
\[
\max\{\ind_G X,\ind_G Y\}
\leq \ind_G(X*Y)
\leq \ind_G X+\ind_G Y+1.
\]
Thus, it is natural to measure the failure of additivity by the
\emph{join defect}
\[
\delta_G(X,Y)
=
\ind_G X+\ind_G Y+1-\ind_G(X*Y).
\]

Csorba~\cite{csorba2007homotopy} constructed \(\mathbb{Z}/2\)-spaces
\(X\) and \(Y\) for which \(\delta_G(X,Y)=1\), and subsequently asked
whether the join defect can be arbitrarily large; see
\cite[Section~3.7]{csorba2005nontidy}. This gives an affirmative answer to the analogous join-defect question for every finite group that is neither cyclic of prime-power order nor generalized quaternion\footnote{Thus, the analogous join-defect question is resolved
affirmatively for every finite group covered by Theorem~\ref{thm:main},
while the cases \(G=\mathbb Z/p\) considered by
Csorba remain open.}. 

\begin{corollary}
\label{cor:large-join-defect}
Let \(G\) be a finite group that is neither cyclic of prime-power order
nor generalized quaternion. For every \(n\geq 1\), there exist finite free
\(G\)-simplicial complexes \(X\) and \(Y\) such that
\[
\ind_G X=\ind_G Y=n
\qquad\text{and}\qquad
\ind_G(X*Y)\leq n+1.
\]
In particular,
\[
\delta_G(X,Y)
=
\ind_G X+\ind_G Y+1-\ind_G(X*Y)
\geq n.
\]
Hence the join defect is unbounded.
\end{corollary}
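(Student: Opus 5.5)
The plan is to derive the corollary directly from Theorem~\ref{thm:main} through a single equivariant map from the join into a join involving the product. Fix \(n\geq 1\). Theorem~\ref{thm:main} gives finite free \(G\)-simplicial complexes \(X\) and \(Y\) with \(\ind_G X=\ind_G Y=n\) and \(\ind_G(X\times Y)=0\). So there is a \(G\)-map \(\varphi\colon X\times Y\to E_0G=G\).

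The key step is to bound \(\ind_G(X*Y)\) by \(n+1\). We split \(X*Y\) according to the join parameter. Let
\[
A=\{(1-t)x\oplus ty: t\leq 2/3\},\qquad B=\{(1-t)x\oplus ty: t\geq 1/3\}.
\]
These are \(G\)-invariant closed sets whose union is \(X*Y\). The set \(A\) deformation retracts equivariantly onto \(X\) by sending \(t\mapsto 0\), and \(B\) retracts onto \(Y\) in the same way. The overlap \(A\cap B\cong X\times Y\times[1/3,2/3]\) maps to \(G\) via \(\varphi\).

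Rather than invoking a general subadditivity lemma for covers, I would write down an explicit \(G\)-map
\[
X*Y\longrightarrow E_nG * E_0G\cong E_{n+1}G.
\]
Fix \(G\)-maps \(f\colon X\to E_nG\) and \(g\colon Y\to E_nG\). Define a map \(\Psi\colon X*Y\to E_nG*G\) in three pieces, treating \(E_nG*G\) as a join with parameter \(s\):

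\begin{itemize}
\item[(i)] On \(t\in[0,1/3]\), send \((1-t)x\oplus ty\) to \((1-3t)f(x)\oplus 3t\,\varphi(x,y)\). At \(t=0\) this depends only on \(x\), so it is well defined there.
\item[(ii)] On \(t\in[1/3,2/3]\), send the point to \(\varphi(x,y)\), i.e.\ the endpoint \(s=1\).
\item[(iii)] On \(t\in[2/3,1]\), send the point to \((3t-2)g(y)\oplus(3-3t)\varphi(x,y)\). At \(t=1\) this depends only on \(y\), so it is well defined there too.
\end{itemize}

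Each piece is \(G\)-equivariant, and the pieces agree on the overlaps. The only delicate point is well-definedness at the ends of the join, where one coordinate is collapsed. There the join coordinate on the \(G\) side is \(0\), so the dependence on the collapsed variable disappears. Continuity follows from the pasting lemma on the closed cover.

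This gives \(\ind_G(X*Y)\leq n+1\). The defect bound is then arithmetic: \(\delta_G(X,Y)\geq 2n+1-(n+1)=n\). Since \(n\) is arbitrary, the join defect is unbounded. I expect no real obstacle; the only care needed is the well-definedness check at \(t=0,1\), which holds as explained in (i) and (iii).
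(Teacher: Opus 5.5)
Your proof is correct and is essentially the paper's argument: the paper covers $X*Y$ by the ends $\{t\neq\tfrac{1}{2}\}\simeq_G X\sqcup Y$ (index $n$) and the middle $\{\tfrac{1}{3}<t<\tfrac{2}{3}\}\simeq_G X\times Y$ (index $0$) and applies subadditivity of $\ind_G$ to this invariant open cover, while your piecewise map into $E_nG*E_0G$ ($f$ and $g$ near the two ends, $\varphi$ in the middle) is that subadditivity argument written out explicitly with a partition of unity in $t$. The one blemish is the preliminary cover $\{t\le\tfrac{2}{3}\},\{t\ge\tfrac{1}{3}\}$ in your second paragraph: your map does not use it, and fed into subadditivity it would give only the trivial bound $2n+1$, so you should drop it.
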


\begin{proof}
By Theorem~\ref{thm:main}, there exist finite free \(G\)-simplicial complexes \(X\)
and \(Y\) satisfying
\[
\ind_G X=\ind_G Y=n
\qquad\text{and}\qquad
\ind_G(X\times Y)=0.
\]

Following the decomposition used in
\cite[Proof of Theorem~1.2]{daneshpajouh2023hedetniemi}, write points of
\(X*Y\) as \((1-t)x\oplus ty\) and consider the \(G\)-invariant open
subsets
\[
A=
\left\{
(1-t)x\oplus ty : t\neq \tfrac12
\right\}
\]
and
\[
B=
\left\{
(1-t)x\oplus ty : \tfrac13<t<\tfrac23
\right\}.
\]
These sets cover \(X*Y\).

The two components of \(A\) equivariantly deformation retract onto the
two ends of the join, so
\(
A\simeq_G X\sqcup Y
\). Therefore,
\[
\ind_G A
=
\ind_G(X\sqcup Y)
=
\max\{\ind_G X,\ind_G Y\}
=
n.
\]
Similarly, \(B\) equivariantly deformation retracts onto the midpoint
of the join, which is naturally \(G\)-homeomorphic to \(X\times Y\).
Hence
\[
\ind_G B=\ind_G(X\times Y)=0.
\]

Applying the subadditivity of the equivariant index to the invariant
open cover \(X*Y=A\cup B\), we obtain
\[
\ind_G(X*Y)
\leq \ind_G A+\ind_G B+1
=
n+0+1
=
n+1.
\]
It follows that
\[
\delta_G(X,Y)
=
\ind_G X+\ind_G Y+1-\ind_G(X*Y)
\geq n+n+1-(n+1)
=
n,
\]
as claimed.
\end{proof}

\section{Proof of the main result}
To prove Theorem~\ref{thm:main}, we first establish two auxiliary lemmas and introduce a necessary definition.

\begin{lemma}\label{lem:subgroups}
Let $G$ be a finite group which is not nice. Then there exist
prime-order subgroups $H,K\leq G$ such that
\[
gHg^{-1}\cap \ell K\ell^{-1}=\{e\}
\]
for all $g,\ell\in G$.
\end{lemma}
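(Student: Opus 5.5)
The plan is to reduce the statement to the classical characterization of finite groups all of whose abelian subgroups are cyclic. Since $H$ and $K$ have prime order, the condition $gHg^{-1}\cap \ell K\ell^{-1}=\{e\}$ for all $g,\ell$ just says that no conjugate of $H$ equals a conjugate of $K$. Equivalently, $H$ and $K$ are \emph{not conjugate} in $G$: if $gHg^{-1}=\ell K\ell^{-1}$ then $H$ and $K$ are conjugate, and conversely. So the task is to find two non-conjugate subgroups of prime order in $G$.

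\textbf{Case 1: $|G|$ has two distinct prime divisors $p\neq q$.} By Cauchy's theorem choose $H$ of order $p$ and $K$ of order $q$. Conjugates of $H$ have order $p$ and conjugates of $K$ have order $q$, so their intersection has order dividing $\gcd(p,q)=1$. This case is immediate.

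\textbf{Case 2: $G$ is a $p$-group.} Since $G$ is not nice, $G$ is neither cyclic nor generalized quaternion. The classical theorem (Burnside) says that a $p$-group with a unique subgroup of order $p$ is cyclic or generalized quaternion. Hence $G$ contains at least two distinct subgroups of order $p$. Distinctness alone is not enough, so I then argue as follows. Suppose all subgroups of order $p$ were conjugate. Take a normal subgroup $N$ of order $p$, which exists because $Z(G)\neq 1$ and contains an element of order $p$. Then $N$ is its own unique conjugate, so $N$ would be the only subgroup of order $p$, which is a contradiction. Therefore we may take $H=N$, central of order $p$, and $K$ any other subgroup of order $p$. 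Since $H$ is normal, $gHg^{-1}=H$, and $H\neq \ell K\ell^{-1}$ because $\ell K\ell^{-1}\neq H$ (else $K=\ell^{-1}H\ell=H$). Distinct subgroups of prime order meet trivially, which gives the required condition.

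The only substantive input is the structure theorem for $p$-groups with a unique subgroup of order $p$ used in Case 2; I would cite it (e.g.\ from Brown's \emph{Cohomology of Groups} or Gorenstein). The rest is elementary. One also needs to check that $G$ is nontrivial, since the trivial group is cyclic of order $p^0$ and hence nice.
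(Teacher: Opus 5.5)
Your proposal is correct and is essentially the paper's proof. Case 1 uses Cauchy's theorem for two distinct primes. Case 2 takes a central subgroup $H$ of order $p$ and a second subgroup $K$ of order $p$, supplied by the cyclic/generalized-quaternion characterization, and uses the normality of $H$ to rule out $\ell K\ell^{-1}=H$.

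Your intermediate ``suppose all subgroups of order $p$ were conjugate'' step is redundant but harmless. Your remark that the trivial group must be excluded is a fair point that the paper leaves implicit.
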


\begin{proof}
We separate two cases. Suppose first that $G$ is not a $p$-group. Choose two distinct primes $p$ and $q$ dividing $|G|$. By Cauchy's theorem, $G$ contains
subgroups
\[
H\cong C_p,\qquad K\cong C_q.
\]
Every conjugate of $H$ has order $p$, and every conjugate of $K$ has
order $q$. Thus
\[
gHg^{-1}\cap \ell K\ell^{-1}=\{e\}
\]
for all $g,\ell\in G$. Suppose now that $G$ is a $p$-group. Since $Z(G)$ is nontrivial,
Cauchy's theorem gives a subgroup
\(
H\leq Z(G)
\)
of order $p$. A finite $p$-group has a unique subgroup of order $p$~\cite[Remark 1]{bui2024topological}
if and only if it is cyclic or, when $p=2$, generalized quaternion.
Since $G$ is not nice, there is another subgroup
\[
K\leq G,\qquad |K|=p,\qquad K\neq H.
\]

Because $H$ is central, \(gHg^{-1}=H\) for every $g\in G$. Moreover, no conjugate of $K$ can equal $H$. Indeed, if
\(
\ell K\ell^{-1}=H,
\)
then, using the centrality of $H$,
\(
K=\ell^{-1}H\ell=H,
\)
a contradiction. Therefore $H$ and $\ell K\ell^{-1}$ are distinct
subgroups of order $p$, so
\[
H\cap \ell K\ell^{-1}=\{e\}.
\]
This proves the result.
\end{proof}

For a subgroup $H\leq G$ and a left $H$-space $A$, write
\(
G\times_H A
\)
for the induced $G$-space, where
\[
(gh,a)\sim(g,ha)
\qquad
(g\in G,\ h\in H,\ a\in A).
\]
The left $G$-action is
\(
u\cdot[g,a]=[ug,a].
\)

\begin{lemma}
\label{lem: index}
Let \(G\) be a finite group and \(H\leq G\). If \(X\) is a free \(H\)-space, then
\[
\operatorname{ind}_{G}(G\times_H X)=\operatorname{ind}_{H}(X).
\]
\end{lemma}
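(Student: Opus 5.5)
We will prove the two inequalities $\ind_G(G\times_H X)\leq \ind_H(X)$ and $\ind_H(X)\leq \ind_G(G\times_H X)$ separately, using that induction $G\times_H(-)$ is left adjoint to restriction from $G$-spaces to $H$-spaces. Concretely, $H$-maps $A\to \operatorname{res}^G_H B$ correspond bijectively to $G$-maps $G\times_H A\to B$ via $f\mapsto \bigl([g,a]\mapsto g f(a)\bigr)$. This map is well defined because $[gh,a]\mapsto ghf(a)=gf(ha)$, which is exactly the image of $[g,ha]$.

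For the upper bound, set $k=\ind_H(X)$ and take an $H$-map $f\colon X\to E_kH$. Since $H\subseteq G$, there is an $H$-equivariant inclusion $E_kH=H^{*(k+1)}\hookrightarrow G^{*(k+1)}=E_kG$, so $f$ may be regarded as an $H$-map into the restriction of $E_kG$. The adjunction then produces a $G$-map $G\times_H X\to E_kG$ given by $[g,x]\mapsto g f(x)$. Continuity holds because $G\times_H X$ is a quotient of the finite disjoint union $G\times X$, and the formula is continuous there.

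For the lower bound, let $m=\ind_G(G\times_H X)$ and take a $G$-map $F\colon G\times_H X\to E_mG$. The map $x\mapsto [e,x]$ is an $H$-equivariant embedding of $X$ into $G\times_H X$, since $h[e,x]=[h,x]=[e,hx]$. Composing gives an $H$-map $X\to \operatorname{res}^G_H E_mG$. This step is the main obstacle: we must show that $\operatorname{res}^G_H E_mG$ admits an $H$-map to $E_mH$, so that $\ind_H(\operatorname{res} E_mG)\leq m$. We plan to argue as follows. As an $H$-set, $G$ decomposes into free orbits $G=\bigsqcup_{i} Hs_i$ (right cosets $H\backslash G$). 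Choose the $H$-map $\rho\colon G\to H$ defined by $hs_i\mapsto h$; it is equivariant under left multiplication because $h'(hs_i)=(h'h)s_i\mapsto h'h$. Joins are functorial, so $\rho^{*(m+1)}\colon G^{*(m+1)}\to H^{*(m+1)}$ is an $H$-map, which is what we need.

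Composing these maps gives $\ind_H(X)\leq m$. If finer care is wanted, one could instead note that $\operatorname{res}E_mG$ is an $m$-dimensional, $(m-1)$-connected free $H$-complex and appeal to the standard characterization of $E_mH$-type spaces, but the explicit join map above avoids this. Freeness of $X$ is used only to ensure that both indices are finite and well defined; the induced space $G\times_H X$ is free because its stabilizers are conjugates of stabilizers in $X$. Combining the two inequalities gives $\ind_G(G\times_H X)=\ind_H(X)$.
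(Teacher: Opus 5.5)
Your proof is correct, but it gets both inequalities by writing the equivariant maps down explicitly, whereas the paper obtains them from obstruction theory.

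\emph{Upper bound.} The paper induces $f$ to a $G$-map $G\times_H X\to G\times_H E_kH$. It then uses the equivariant extension property (a free $k$-dimensional $G$-complex maps equivariantly into the $(k-1)$-connected space $E_kG$) to get some $G$-map $G\times_H E_kH\to E_kG$. Your adjunction map $[g,x]\mapsto g\,\iota(f(x))$, with $\iota\colon E_kH\hookrightarrow E_kG$ the join of the inclusion $H\hookrightarrow G$, is exactly such a composite. In your version the second factor is the concrete map $[g,z]\mapsto g\,\iota(z)$.

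\emph{Lower bound.} The paper again invokes connectivity of $E_mH$ to produce an $H$-map $E_mG\to E_mH$. You instead construct one as $\rho^{*(m+1)}$ from the coset retraction $\rho\colon G\to H$, $hs_i\mapsto h$.

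\emph{Comparison.} Your route is more elementary and self-contained. It uses only functoriality of joins and the decomposition of $G$ into free $H$-orbits, with no CW structure, dimension count, or connectivity input. The paper's route is less tied to the join model. It only needs the targets to be suitably connected and the sources to be free complexes of the right dimension, so it applies equally to any other $(k-1)$-connected free model of the target.

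One minor inaccuracy in your closing remark: freeness does not by itself make the indices finite, since an infinite-dimensional free $H$-space can have infinite index. Freeness is only needed for the indices to be defined. Your two inequalities hold in the extended sense anyway, so nothing in the proof depends on that claim.
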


\begin{proof}
Let \(m=\operatorname{ind}_{H}(X)\). By definition, there exists an \(H\)-equivariant map \(f\colon X\to E_mH\). Inducing \(f\) from \(H\) to \(G\) gives a \(G\)-equivariant map
\[
\widetilde f\colon G\times_H X\longrightarrow G\times_H E_mH,
\qquad [g,x]\longmapsto [g,f(x)].
\]
The space \(G\times_H E_mH\) is a free \(m\)-dimensional \(G\)-complex. Since \(E_mG\) is \((m-1)\)-connected, the equivariant extension property~\cite[Lemma 6.2.2]{matousek2008using} yields a \(G\)-equivariant map \(G\times_H E_mH\to E_mG\). Composing these maps gives a \(G\)-equivariant map \(G\times_H X\to E_mG\). Therefore,
\[
\operatorname{ind}_{G}(G\times_H X)\leq \operatorname{ind}_{H}(X).
\]

Conversely, let \(n=\operatorname{ind}_{G}(G\times_H X)\). By definition, there exists a \(G\)-equivariant map \(\varphi\colon G\times_H X\to E_nG\). After restricting the action to \(H\), the space \(E_nG\) is a free \(n\)-dimensional \(H\)-space. Since \(E_nH\) is \((n-1)\)-connected, there exists an \(H\)-equivariant map \(\psi\colon E_nG\to E_nH\). The natural map \(i\colon X\to G\times_H X\), given by \(i(x)=[e,x]\), is \(H\)-equivariant. Hence the composition
\[
X\xrightarrow{i}G\times_H X\xrightarrow{\varphi}E_nG\xrightarrow{\psi}E_nH
\]
is \(H\)-equivariant. It follows that
\[
\operatorname{ind}_{H}(X)\leq \operatorname{ind}_{G}(G\times_H X).
\]
Combining the two inequalities proves the result.
\end{proof}

\begin{proof}[Proof of Theorem~\ref{thm:main}]
Choose prime-order subgroups \(H,K\leq G\) as in Lemma~\ref{lem:subgroups}, and define \(X_n=G\times_H E_nH\) and \(Y_n=G\times_K E_nK\). Since \(E_nH\) and \(E_nK\) are finite free simplicial complexes, the induced spaces \(X_n\) and \(Y_n\) inherit natural structures of finite free \(G\)-simplicial complexes. By Lemma~\ref{lem: index} and the standard identity \(\ind_L(E_nL)=n\)
for every nontrivial finite group \(L\)
\cite[Proposition~6.2.4]{matousek2008using}, we obtain
\[
\ind_G(X_n)=\ind_H(E_nH)=n
\qquad\text{and}\qquad
\ind_G(Y_n)=\ind_K(E_nK)=n.
\]

It remains to prove that the product has index zero. Define \(G\)-equivariant maps \(q_H\colon X_n\to G/H\) and \(q_K\colon Y_n\to G/K\) by \(q_H([g,x])=gH\) and \(q_K([g,y])=gK\), respectively. Their product gives a \(G\)-equivariant map
\[
q_H\times q_K\colon X_n\times Y_n\longrightarrow G/H\times G/K,
\]
where \(G\) acts diagonally on \(G/H\times G/K\).

We claim that this diagonal action is free. Indeed, if \(a\in G\) fixes \((gH,\ell K)\), then \(agH=gH\) and \(a\ell K=\ell K\). Equivalently,
\[
a\in gHg^{-1}\cap \ell K\ell^{-1}.
\]
By the choice of \(H\) and \(K\), the intersection \(gHg^{-1}\cap \ell K\ell^{-1}\) is trivial for every \(g,\ell\in G\). Hence \(a=e\), proving that \(G/H\times G/K\) is a free finite \(G\)-set.

Every free \(G\)-set admits a \(G\)-equivariant map to \(G\), where \(G\) acts on itself by left multiplication. For completeness, choose one element \(s_i\) from each \(G\)-orbit in \(G/H\times G/K\), and define \(\varphi\colon G/H\times G/K\to G\) by \(\varphi(as_i)=a\). This is well defined because the action is free: if \(as_i=bs_i\), then \(b^{-1}a\) fixes \(s_i\), and therefore \(a=b\). Moreover, \(\varphi\) is \(G\)-equivariant by construction. Consequently, the composition
\[
X_n\times Y_n\xrightarrow{\,q_H\times q_K\,}G/H\times G/K\xrightarrow{\,\varphi\,}G=E_0G
\]
is a continuous \(G\)-equivariant map. Thus
\(
\ind_G(X_n\times Y_n)=0
\).
\end{proof}

\begin{remark}
The proof by Bui and Daneshpajouh~\cite{bui2024topological} that
\(T_G(1)=0\) for \(G\) not nice is combinatorial. More precisely, they
construct two finite free \(G\)-posets, each having cross-index \(1\),
whose product has cross-index \(0\). Recall that the cross-index is a
combinatorial analogue of the topological index; see~\cite{simonyi2013colourful, bui2026mapping, daneshpajouh2025box} for further background and its applications. It is worth mentioning that this construction can also be adapted to the cross-index setting to show that, for every \(n\geq 1\), there exist two finite free \(G\)-posets of cross-index \(n\) whose product has cross-index zero. Thus, the same strong failure occurs at the combinatorial level.
\end{remark}

\section{Open Problems}

We conclude with two natural questions concerning the groups not covered by
Theorem~\ref{thm:main}. Recall that a finite group \(G\) is called
\emph{nice} if it is either cyclic of prime-power order or generalized
quaternion.

\begin{question}
\label{q:weak-topological-hedetniemi}
Let \(G\) be a nice finite group. Is it true that
\(
\lim_{n\to\infty} T_G(n)=\infty
\)?
\end{question}

\begin{question}
\label{q:unbounded-join-defect}
Let \(G\) be a nice finite group. Is the join defect
\[
\delta_G(X,Y)
=
\ind_G X+\ind_G Y+1-\ind_G(X*Y)
\]
unbounded as \(X\) and \(Y\) range over finite free \(G\)-simplicial
complexes? Equivalently, for every \(n\geq 1\), do there exist finite free
\(G\)-simplicial complexes \(X\) and \(Y\) such that
\[
\ind_G X+\ind_G Y+1-\ind_G(X*Y)\geq n?
\]
\end{question}
As shown in the proof of Corollary~\ref{cor:large-join-defect}, if the
sequence \(\{T_G(n)\}_{n\geq 1}\) is bounded for some $G$, then the join defect is
unbounded. Consequently, Question~\ref{q:unbounded-join-defect} has an
affirmative answer for that \(G\).

\bibliographystyle{amsplain}
\bibliography{biblio}

\end{document}